\newcommand{\dx}{\,dx}
\newcommand{\dmu}{\,d\mu}
\newcommand{\E}{\mathrm{e}}
\newcommand{\R}{\mathbb{R}}
\newcommand{\Z}{\mathbb{Z}}
\newcommand{\Rn}{\mathbb{R}^n}
\newcommand{\BMO}{\mathrm{BMO}}
\newcommand{\Ar}{\Arrowvert}
\DeclareMathOperator{\bmo}{BMO}
\def\mvint_#1{\mathchoice%
          {\mathop{\kern 0.2em\vrule width 0.6em height 0.69678ex depth -0.58065ex
                  \kern -0.8em \intop}\nolimits_{\kern -0.4em#1}}%
          {\mathop{\kern 0.1em\vrule width 0.5em height 0.69678ex depth -0.60387ex
                  \kern -0.6em \intop}\nolimits_{#1}}%
          {\mathop{\kern 0.1em\vrule width 0.5em height 0.69678ex depth -0.60387ex
                  \kern -0.6em \intop}\nolimits_{#1}}%
          {\mathop{\kern 0.1em\vrule width 0.5em height 0.69678ex depth -0.60387ex
                  \kern -0.6em \intop}\nolimits_{#1}}}
\def\sqr#1#2{{\vcenter{\hrule height.#2pt\hbox{\vrule
    width.#2pt height#1pt\kern#1pt\vrule width.#2pt}\hrule height.#2pt}}}
\def\square{\mathchoice\sqr{5.5}4 \sqr{5.0}4 \sqr{4.8}3 \sqr{4.8}3}
\def\XXint#1#2#3{{\setbox0=\hbox{$#1{#2#3}{\int}$}
     \vcenter{\hbox{$#2#3$}}\kern-.5\wd0}}
\theoremstyle{plain}
\newtheorem{theorem}[equation]{Theorem}
\newtheorem{lemma}[equation]{Lemma}
\newtheorem{proposition}[equation]{Proposition}
\numberwithin{equation}{section}
\theoremstyle{definition}
\newtheorem{definition}[equation]{Definition}
\newtheorem{example}[equation]{Example}
\theoremstyle{remark}
\newtheorem{remark}[equation]{Remark}
\title{John-Nirenberg lemmas for a doubling measure}
\author{Daniel Aalto \and Lauri Berkovits \and Outi Elina Maasalo \and Hong Yue}
\date{\today}
\begin{document}

\subjclass[2000]{42B25}

\begin{abstract}
We study, in the context of doubling metric measure spaces, a class of $\bmo$ type 
functions defined by John and Nirenberg. In particular, we present a new version of the 
Calder\'on--Zygmund decomposition in metric spaces and use it to prove the 
corresponding John--Nirenberg 
inequality.
\end{abstract}

\maketitle

\section{Introduction}

Besides the well known class of functions of bounded mean oscillation,
$\bmo$, F. John and L. Nirenberg 
defined another, larger class of functions in their paper~\cite{JoNi61}. We call this  
space John-Nirenberg space with exponent $p$  and write JN$_p$. Whereas the classical John-Nirenberg 
lemma shows that any function of bounded mean oscillation has exponentially decaying distribution 
function, any function in JN$_p$ belongs to weak $L^p$. 

Unlike $\bmo$, the John-Nirenberg space has not been systematically studied. In this paper we 
generalize the definition into doubling metric measure spaces by replacing the cubes in the original 
definition by metric space balls, and, in particular, prove the John-Nirenberg lemma for JN$_p$ in this
 setting. We also study properties of the space, and, for example, show that every $p$-integrable function 
 is in the John-Nirenberg space with the same exponent, and provide an example of a function in the 
 weak $L^p$ that is not a John-Nirenberg function.

In the Euclidean case there are a few proofs of the John-Nirenberg inequality for JN$_p$. The original
 proof in~\cite{JoNi61}, based on an induction argument, can be found with more details in~\cite{Gi03}.
 There is an alternative proof in the real line; see~\cite{To86}. As these proofs are rather difficult 
 to follow, we also present here a new proof in the Euclidean case. The proof is based on iterating
 a suitable good-$\lambda$-inequality. It is interesting that this proof generalizes directly to the 
 setting of doubling metric measure spaces via dyadic sets defined by
 M. Christ; see \cite{AiBeIa05} or \cite{Ch90}
 for the definition.

To prove the John-Nirenberg inequality for JN$_p$ in the metric case we have adapted ideas from 
A. P. Calder\'on's proof of 
the classical 
John-Nirenberg lemma for $\bmo$ in the Euclidean setting in \cite{Ne77}, and from the
 aforementioned proof in \cite{To86}.
To this end, we present a new version 
of the Calder\'on-Zygmund decomposition in metric spaces. The advantage of this version is that we 
are able to iterate it efficiently, which is not trivial in the metric setting. We also get both lower 
and upper bounds for mean values over the decomposition balls. Existence of a doubling measure is the
 only assumption we need to impose on the space. 

Calder\'on's method is remarkably flexible as illustrated by a 
simplified proof of the so-called parabolic John-Nirenberg inequality
by E. Fabes and N. Garofalo; see \cite{FG}.
To further demonstrate this flexibility of Calder\'on's technique and the use of our decomposition lemma 
we also give a new proof of the classical 
John-Nirenberg lemma for $\bmo$ in doubling metric measure spaces. The lemma has previously been 
generalized into doubling metric measure spaces, for example, in \cite{Bu99}, \cite{MaPe02}, 
\cite{MaMaNiOr00}. 

\emph{Acknowledgements.} D.A. and O.E.M. were supported 
by the Finnish Academy of Science and Letters Vilho, Yrj\"o and Kalle V\"ais\"al\"a Foundation; L.B. was supported 
by the Finnish Cultural Foundation, North Ostrobothnia Regional fund;
H.Y. was supported by the Academy of Finland.
The authors would like to thank A. Bj\"orn, J. Kinnunen and X. Tolsa for valuable discussions.

\section{Doubling metric measure spaces}

Let $(X,d,\mu)$ be a metric space endowed with a metric $d$ and a Borel regular measure $\mu$. We assume that an open ball always comes with a center and a radius, i.e.
$$
B=B(x,r)=\{y\in X:d(y,x)<r\}.
$$
We denote with $\lambda B$ the $\lambda$-dilate of $B$, that is a ball with the same center as $B$ but $\lambda$ times its radius.
We assume that $\mu$ is doubling, i.e. all open balls have positive and finite measure whenever $r>0$ and there exists a constant $c_{\mu}\geq 1$, called the doubling constant of $\mu$, so that
$$
\mu (2B)\leq c_{\mu}\mu(B)
$$
for all $B$ in $X$. 

The doubling condition implies a covering theorem, sometimes referred to as the Vitali covering theorem. Indeed, given any collection of balls with uniformly bounded radius, there exists a pairwise disjoint, countable subcollection of balls, whose 5-dilates cover the union of the original collection. This theorem implies Lebesgue's differentiation theorem, which guarantees that any locally integrable function can be approximated at almost every point by integral averages of the function over a contracting sequence of balls.

The Hardy-Littlewood maximal function $Mf$ of a locally integrable function $f$ is defined
for every $x\in X$ by
$$
Mf(x) = \sup_{ B\ni x} \mvint_B|f|\dmu ,
$$
where 
$$
f_B=\mvint_Bf\dmu = \frac{1}{\mu(B)}\int_Bf\dmu,
$$
and the supremum is taken over all balls containing $x$. The Hardy-Littlewood maximal function satisfies 
\begin{equation}\label{strongpp}
\|Mf\|_p\leq c(p,\mu)\|f\|_p
\end{equation}

for every $f\in L^p(X)$ with $1<p\leq \infty$. For the proof of \eqref{strongpp}, Vitali covering theorem and futher information on metric spaces, see, for example, \cite{He01}.


\section{The second John-Nirenberg inequality for a doubling measure}

We begin by recalling the definition of the John-Nirenberg space in the Euclidean case; see \cite{JoNi61}. Let $Q_0$ be a cube in $\Rn$ and $1 \leq p < \infty$. 
An integrable function $f$ defined on $Q_0$ belongs to $JN_p(Q_0)$, the John-Nirenberg 
space with exponent $p$, if there exists $K_f < \infty$ so that
\begin{equation}\label{csum}
\sum_i|Q_i|\left[\mvint_{Q_i}|f -f_{Q_i}|\dx\right]^p \leq K_f ^p
\end{equation}
independent of the family $\{Q_i\}_{i=1}^{\infty}$, where $Q_i$ are subcubes of $Q_0$ 
such that $\bigcup Q_i=Q_0$ and the interiors of $Q_i$ are disjoint.

Observe that the definition given by cubes can be directly generalised in metric spaces.
Indeed, the dyadic structure of the Euclidean cubes can be transferred to a doubling metric measure space using Christ's construction \cite{Ch90}. Then the natural definition is in terms of these dyadic sets. However, the definition of JN$_p$ in a doubling metric measure space is most natural in terms of balls. Balls cannot be organised in a simple dyadic way in nested generations as cubes in $\R^n$ and we have thus chosen to define the space JN$_p$ so that the definition is compatible with the Vitali covering theorem.

\begin{definition}
Let $(X,d,\mu)$ be a metric measure space, $1< p <\infty$ and $B_0\subset X$ a ball.
Let $f$ be a locally integrable function defined on $11B_0$. We say that $f$ belongs to the John-Nirenberg space with exponent $p$, and we
write $f \in JN_p(B_0)$, if there exists $K_f < \infty$ so that
$$
\sum_i \mu(B_i) \left( \mvint_{B_i} |f-f_{B_i}| \dmu \right)^p \leq K_f^p 
$$
whenever $\{B_i\}$ is a countable collection of balls centered at $B_0$ and contained 
in $11B_0$ with the property that the balls $\frac{1}{5}B_i$ are pairwise disjoint. 
We will call the smallest possible constant $K_f$ the JN$_p$ norm of $f$. 
 \end{definition}

\begin{remark}
Observe that JN$_p$ is a generalization of BMO. Indeed, it follows 
directly from the definitions that a function is of bounded mean oscillation 
if and only if the JN$_p$ norm is bounded as $p$ tends to infinity.
\end{remark}

The next result shows that there are plenty of functions in John-Nirenberg spaces.

\begin{proposition}
\label{subset}
Let $1<p<\infty$ and $f\in L^p(11B_0)$. Then $f\in JN_p(B_0)$.
\end{proposition}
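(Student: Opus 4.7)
The plan is to reduce the claim to an application of the Hardy–Littlewood maximal inequality \eqref{strongpp}, using the pairwise disjointness of the balls $\tfrac{1}{5}B_i$ and the doubling property of $\mu$.

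First I would replace the oscillation by the average of $|f|$. By the triangle inequality,
$$
\mvint_{B_i}|f-f_{B_i}|\dmu \le 2\mvint_{B_i}|f|\dmu,
$$
so the sum in the definition is bounded by $2^p\sum_i \mu(B_i)\bigl(\mvint_{B_i}|f|\dmu\bigr)^p$. Next, extend $f$ by zero outside $11B_0$, so that $f\in L^p(X)$ with $\|f\|_{L^p(X)}=\|f\|_{L^p(11B_0)}$ and the Hardy–Littlewood maximal function $Mf$ is well defined on $X$.

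Now for each $i$ and every $x\in \tfrac{1}{5}B_i\subset B_i$ we have $\mvint_{B_i}|f|\dmu\le Mf(x)$, hence
$$
\mu(\tfrac{1}{5}B_i)\Bigl(\mvint_{B_i}|f|\dmu\Bigr)^p\le \int_{\frac{1}{5}B_i}(Mf)^p\dmu.
$$
Since $5B\subset 8B$, iterating the doubling condition three times gives $\mu(B_i)\le c_\mu^{3}\mu(\tfrac{1}{5}B_i)$. Using this together with the disjointness of the balls $\tfrac{1}{5}B_i$ and the inclusion $\tfrac{1}{5}B_i\subset 11B_0$, I would sum over $i$ to obtain
$$
\sum_i \mu(B_i)\Bigl(\mvint_{B_i}|f|\dmu\Bigr)^p
\le c_\mu^{3}\sum_i\int_{\frac{1}{5}B_i}(Mf)^p\dmu
\le c_\mu^{3}\int_{11B_0}(Mf)^p\dmu.
$$

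Finally, applying \eqref{strongpp} to the extended $f$ (which is in $L^p(X)$) gives $\int_X(Mf)^p\dmu\le c(p,\mu)^p\|f\|_{L^p(11B_0)}^p$. Combining the estimates yields
$$
\sum_i \mu(B_i)\Bigl(\mvint_{B_i}|f-f_{B_i}|\dmu\Bigr)^p \le 2^p c_\mu^{3} c(p,\mu)^p \|f\|_{L^p(11B_0)}^p,
$$
uniformly in admissible collections $\{B_i\}$. Hence $f\in JN_p(B_0)$ with $K_f\le 2c_\mu^{3/p}c(p,\mu)\|f\|_{L^p(11B_0)}$. There is no serious obstacle here; the only point that requires a little care is making sure the maximal inequality is applied to a function defined on all of $X$, which is handled by the zero extension.
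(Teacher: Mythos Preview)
Your proof is correct and follows essentially the same approach as the paper: bound the oscillation by $2\mvint_{B_i}|f|\dmu$, use that $\mvint_{B_i}|f|\dmu\le Mf(x)$ for every $x\in \tfrac{1}{5}B_i$, pass from $\mu(B_i)$ to $\mu(\tfrac{1}{5}B_i)$ via doubling, exploit the disjointness of the $\tfrac{1}{5}B_i$, and finish with the $L^p$-boundedness of the maximal operator. Your presentation is in fact slightly more streamlined---the paper routes the same pointwise bound through the intermediate quantity $\mvint_{\frac{1}{5}B_i}Mf\dmu$ and then applies H\"older, whereas you integrate $(Mf)^p$ over $\tfrac{1}{5}B_i$ directly---and you make explicit the zero extension needed to apply \eqref{strongpp} on all of $X$, which the paper leaves implicit.
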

\begin{proof}
Let $B_i$ be a family of balls that is admissible in the definition of $JN_p(B_0)$. 
Write $B_i'=\frac{1}{5}B_i$ for the disjoint balls.
We know that for every ball $B_i$ it holds that
$$
\mvint_{B_i'}Mf\dmu
\geq
\inf_{x\in B_i'}Mf(x) 
\geq 
\mvint_{B_i}|f|\dmu.
$$
Hence,
$$
\sum_{i}\mu(B_i)\left(\mvint_{B_i}|f-f_{B_i}|\dmu\right)^p
\leq
2^pc_{\mu}^3\sum_{i}\mu(B_i')\left(\mvint_{B_i'}Mf\dmu\right)^p.
$$
Now by H\"older's inequality
$$
\mu(B_i')\left(\mvint_{B_i'}Mf\dmu\right)^p
\leq
\int_{B_i'}(Mf)^p\dmu,
$$
and by the disjointness of the balls $B_i'$ and by the boundedness of the maximal operator we have
$$
\sum_i\int_{B_i'}(Mf)^p\dmu
\leq
\int_{11B_0}(Mf)^p\dmu
\leq
c\int_{11B_0}|f|^p\dmu,
$$
which is finite by the assumption. This completes the proof.
\end{proof}

Notice, that in $\R^n$ Proposition~\ref{subset} follows from the definition simply by using the H\"older inequality. 


The John-Nirenberg inequality for JN$_p(Q_0)$ shows that it is contained in weak $L^p(Q_0)$. The following 
 one-dimensional example shows that the inclusion is strict.
 
\begin{example}
\label{notlp}
Consider the function $f(x)=x^{-\frac{1}{p}}$ on $Q_0=(0,2)$ with $p>1$. 
It is clear that the function belongs to weak $L^p(Q_0)$. 
Let us partition the interval $Q_0$ as
$Q_j=(2^{-j},2^{1-j})$, where $j=0,1, \ldots$,  
to see that (\ref{csum}) fails.
A simple change of variable $x=2^{-j}y$ shows that
$f_{Q_j}=2^{j/p}f_{Q_0}$. Similarly, we set
$I=|f-f_{Q_0}|_{Q_0}$ and conclude that $|f-f_{Q_j}|_{Q_j}=2^{j/p}I$.
Hence, the sum in (\ref{csum}) diverges.
\end{example}

The following theorem is our main result.

\begin{theorem}
\label{mainresult}
If $f \in JN_p(B_0)$, then
\begin{equation}
\mu(\{x \in B_0: |f(x)-f_{B_0}|>\lambda\}) \leq C\left(\frac{K_f}{\lambda}\right)^p,
\end{equation}
where $C$ only depends on $p$ and the doubling constant.
\end{theorem}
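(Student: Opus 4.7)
The plan is to adapt Calder\'on's strategy from his proof of the classical $\bmo$ John--Nirenberg inequality to the JN$_p$ setting: a single application of the new Calder\'on--Zygmund decomposition developed earlier in the paper, to $f - f_{B_0}$ on $B_0$ at height $\lambda$, should produce a family of balls which is admissible in the definition of JN$_p$ and on which the weak-type estimate can be read off directly from the JN$_p$ hypothesis.

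First I would handle the trivial range of $\lambda$. Applying the JN$_p$ definition to the singleton family $\{B_0\}$ gives $\mvint_{B_0}|f - f_{B_0}|\dmu \leq K_f / \mu(B_0)^{1/p}$, so the bound $\mu(E_\lambda) \leq \mu(B_0)$ already implies the stated conclusion whenever $\lambda \lesssim K_f/\mu(B_0)^{1/p}$. For larger $\lambda$, I would invoke the decomposition at height $\lambda$ to obtain a countable collection $\{B_i\}$ of balls centered in $B_0$, contained in $11B_0$, with $\tfrac{1}{5}B_i$ pairwise disjoint, such that
\begin{enumerate}
\item[(i)] $|f(x) - f_{B_0}| \leq C_1 \lambda$ for $\mu$-a.e. $x \in B_0 \setminus \bigcup_i B_i$;
\item[(ii)] $\mvint_{B_i}|f - f_{B_i}|\dmu \geq c_2 \lambda$ for every $i$,
\end{enumerate}
where $C_1, c_2$ depend only on $c_\mu$. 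Property (i) is the usual stopping-time output, while (ii) --- a lower bound on the mean oscillation of $f$ around its own average on each selected ball --- is the new feature of the decomposition, and is precisely the quantity that the JN$_p$ hypothesis controls.

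Granted (i) and (ii) the argument is immediate. Since $\{B_i\}$ is admissible in the definition of JN$_p(B_0)$,
\[
(c_2\lambda)^p \sum_i \mu(B_i) \leq \sum_i \mu(B_i) \left( \mvint_{B_i} |f - f_{B_i}| \dmu \right)^p \leq K_f^p,
\]
and by (i),
\[
\mu\bigl(\{x \in B_0 : |f(x) - f_{B_0}| > C_1 \lambda\}\bigr) \leq \sum_i \mu(B_i) \leq K_f^p / (c_2 \lambda)^p.
\]
Relabeling $\lambda \mapsto \lambda / C_1$ gives the theorem with $C = (C_1/c_2)^p$, depending only on $p$ and $c_\mu$.

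The hard part is property (ii). A classical Calder\'on--Zygmund stopping-time only delivers $\lambda < \mvint_{B_i} |f - f_{B_0}|\dmu$ together with an upper bound on $\mvint_{B_i}|f - f_{B_0}|\dmu$ of the same order of magnitude via doubling of a parent ball; the triangle inequality $\mvint_{B_i}|f - f_{B_i}|\dmu \geq \mvint_{B_i}|f - f_{B_0}|\dmu - |f_{B_i} - f_{B_0}|$ is therefore useless when $|f_{B_i} - f_{B_0}|$ is comparable to $\lambda$. Securing the uniform lower bound (ii) is the job of the refined, iterable decomposition the authors construct; carrying out this construction in a doubling metric space, where balls do not nest dyadically and the resulting family must nevertheless remain admissible (centers in $B_0$, contained in $11B_0$, with $\tfrac{1}{5}B_i$ pairwise disjoint), is the principal technical difficulty addressed by the paper before reaching this theorem.
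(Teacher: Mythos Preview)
Your property (ii) is not what the paper's decomposition (Lemma~\ref{CZ}) delivers, and in fact no Calder\'on--Zygmund stopping argument can deliver it. Applied to $|f-f_{B_0}|$ at level $\lambda$, the lemma gives
\[
\lambda < \mvint_{B_i}|f-f_{B_0}|\dmu \leq c_\mu^3\lambda
\quad\text{and}\quad
c_\mu^{-3}\lambda < \mvint_{5B_i}|f-f_{B_0}|\dmu \leq \lambda,
\]
i.e.\ two-sided control of the \emph{average of $|f-f_{B_0}|$}, not of the oscillation $\mvint_{B_i}|f-f_{B_i}|\dmu$. Your own final paragraph pinpoints why one cannot pass from the former to the latter, and the obstruction is real: take $f = 2\lambda\,\chi_A$ with $f_{B_0}=0$ and a selected ball $B_i\subset A$; then $\mvint_{B_i}|f-f_{B_0}|\dmu = 2\lambda$ while $\mvint_{B_i}|f-f_{B_i}|\dmu = 0$. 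The ``refined, iterable'' feature of the decomposition is not a lower oscillation bound at all; it is the nesting clause at the end of Lemma~\ref{CZ}, which lets the balls at level $2\lambda$ be chosen inside the $5$-dilates of the balls at level $\lambda$.

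The paper's argument is therefore structurally different from what you propose. In place of a single stopping time it proves a good-$\lambda$ inequality (Lemma~\ref{toiterate}): using the nesting, each $B_j(2\lambda)$ sits in some $5B_i(\lambda)$, the family $\{5B_i(\lambda)\}$ is admissible for JN$_p$, and H\"older together with the JN$_p$ bound yields
\[
\sum_j \mu(B_j(2\lambda)) \leq \frac{c_\mu^{3/q}K_f}{\lambda}\Bigl(\sum_i \mu(B_i(\lambda))\Bigr)^{1/q}.
\]
This is then iterated along the chain $\lambda_0 < 2\lambda_0 < \dots < 2^N\lambda_0$; the exponents $1+q^{-1}+\dots+q^{-(N-1)}$ telescope to produce the factor $\lambda^{-p}$. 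The JN$_p$ hypothesis is thus invoked at \emph{every} level of the iteration, through the oscillations on the $5B_i(\lambda)$, and this is precisely what substitutes for the unavailable lower bound (ii) you postulate.
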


To prove the theorem we need two lemmas. The first one is a Calderon-Zygmund decomposition lemma 
and the second one is a good-$\lambda$ type inequality. 
The key idea behind the proof of 
Theorem~\ref{mainresult} stems from the method used in \cite{Ne77}.

\begin{lemma}
\label{CZ}
Let $f$ be a non-negative locally integrable funtion on $X$. Fix a ball $B_0=B(x_0,R)$ and assume that
\begin{equation*}
\lambda_0 \geq \frac{1}{\mu(B_0)} \int_{11B_0} f \dmu.
\end{equation*}
Then there exists a countable, possibly finite, family of disjoint balls 
$\{B_i\}_i$ centered in $B_0$ and
satisfying $5B_i \subset 11B_0$ so that
\begin{enumerate}[i)] 
\item $f(x)\leq \lambda_0$ for $\mu$-a.e. $x \in B_0\setminus \bigcup_i 5B_i$,
\item $\lambda_0 < \mvint_{B_i} f\leq c_\mu^3 \lambda_0$,
\item $c_\mu^{-3}{\lambda_0} < 
\mvint_{5B_i} f \leq \lambda_0$.
\end{enumerate}
The balls satisfying the above conditions are called Calder\'on-Zygmund balls at level $\lambda_0$.
Moreover, if $\lambda_0 \leq \lambda_1 \leq \ldots \leq \lambda_N$, 
then the 
Calder\'on-Zygmund balls corresponding 
to different levels $\lambda_n$ may be chosen in such a way that each
$B_i(\lambda_{n+1})$ is contained in some $5B_j(\lambda_n)$.
\end{lemma}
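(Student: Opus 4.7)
My plan is to run a one-point stopping argument along the geometric radial scale $R_x\cdot 5^{-k}$ at $\mu$-almost every $x\in B_0$ with $f(x)>\lambda_0$, and then extract a disjoint family via the Vitali covering theorem. For each such Lebesgue point $x$, let
$$R_x := \frac{11R-d(x,x_0)}{5},$$
which is the largest radius for which $5B(x,R_x)\subset 11B_0$. Since $d(x,x_0)<R$ forces $R_x>2R$, the ball $B(x,R_x)$ contains $B_0$, and the hypothesis on $\lambda_0$ yields
$$\mvint_{B(x,R_x)} f\dmu \leq \frac{1}{\mu(B_0)}\int_{11B_0} f\dmu \leq \lambda_0.$$
Along the scale $r_k:=R_x\cdot 5^{-k}$ the mean value is at most $\lambda_0$ when $k=0$ and exceeds $\lambda_0$ for all sufficiently large $k$ by Lebesgue's differentiation theorem, so a least index $k_x\geq 1$ with $\mvint_{B(x,r_{k_x})} f>\lambda_0$ exists. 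Setting $r_x:=r_{k_x}$, one has $5r_x=r_{k_x-1}$ and therefore $\mvint_{5B(x,r_x)} f\leq\lambda_0$. The complementary bounds in (ii) and (iii) follow by comparing the two mean values via $\mu(5B(x,r_x))\leq c_\mu^3\mu(B(x,r_x))$, which is three iterations of the doubling condition.

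Next, since $r_x\leq R_x\leq 11R/5$ holds uniformly, the Vitali covering theorem stated earlier in the paper produces a countable, pairwise disjoint subfamily $\{B_i=B(x_i,r_{x_i})\}$ whose $5$-dilates cover the union of the selected balls, and in particular cover the set $E:=\{x\in B_0:f(x)>\lambda_0\}$ up to a $\mu$-null set. The balls $B_i$ are centered in $B_0$ and satisfy $5B_i\subset 11B_0$ by the choice of $R_{x_i}$. Property (i) is immediate from the definition of $E$ together with Lebesgue's differentiation theorem, while (ii) and (iii) are inherited from the stopping construction at the retained centers.

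For the nesting claim I would fix the geometric scale $R_y\cdot 5^{-k}$ at each $y$ independently of the level, so that the stopping index $k_y^{(n)}$ is automatically non-decreasing in $n$, and hence $r_y^{(n+1)}\leq r_y^{(n)}$ whenever $y$ is a stopping point at both levels. The compatible families at level $\lambda_{n+1}$ can then be selected one parent ball $5B_j(\lambda_n)$ at a time, by running the local stopping and Vitali procedure within each $5B_j(\lambda_n)$ at threshold $\lambda_{n+1}$. The main obstacle I anticipate is precisely this geometric bookkeeping: first, one must check that the local bound $\mvint_{5B_j(\lambda_n)} f\leq\lambda_n\leq\lambda_{n+1}$ really does replace the global hypothesis on $\lambda_0$ despite the asymmetric definition involving the $11$-dilate; second, one must verify that $B_i(\lambda_{n+1})\subset 5B_j(\lambda_n)$ can be forced—merely knowing that the center of a candidate child ball lies inside $5B_j(\lambda_n)$ is not enough, so some further shrinking of $r_y^{(n+1)}$ along the same scale may be required, while preserving both the covering of $E_{n+1}$ and the two-sided bounds (ii) and (iii).
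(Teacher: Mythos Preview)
Your single-level construction is correct and matches the paper's argument in spirit: a $5$-adic stopping-time along centered balls followed by Vitali selection. The paper packages it slightly differently, using the restricted maximal function $M_{B_0}f(x)=\sup_{B\ni x,\,B\subset B_0}\mvint_B f\,d\mu$ and non-centered seed balls $B_x\subset B_0$ with $\mvint_{B_x}f>\lambda$, then \emph{dilating} by powers of $5$ until the average drops below $\lambda$; you instead start from the large centered ball $B(x,R_x)\supset B_0$ and \emph{contract}. Either direction gives (ii) and (iii) with the factor $c_\mu^3$.

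Where your proposal is incomplete is the nesting, and the obstacles you anticipate are phantoms created by your suggestion to run a \emph{local} Vitali procedure inside each $5B_j(\lambda_n)$. Drop that idea and apply Vitali globally at every level, using the very monotonicity you already noted. Concretely: if $y$ is a Lebesgue point with $f(y)>\lambda_{n+1}\geq\lambda_n$, then $y$ is also a stopping point at level $\lambda_n$, and since the scale $R_y\cdot 5^{-k}$ is fixed you have $r_y^{(n+1)}\leq r_y^{(n)}$, hence
\[
B\bigl(y,r_y^{(n+1)}\bigr)\ \subset\ B\bigl(y,r_y^{(n)}\bigr).
\]
Now $B(y,r_y^{(n)})$ is one of the candidate balls from which the Vitali family at level $\lambda_n$ was extracted, so by the Vitali theorem it is contained in $5B_j(\lambda_n)$ for some $j$. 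In particular every candidate ball at level $\lambda_{n+1}$, and a fortiori every selected ball $B_i(\lambda_{n+1})$, sits inside some $5B_j(\lambda_n)$. No further shrinking is needed, and the asymmetry with the $11$-dilate never enters. This is precisely the mechanism the paper uses: it builds the families from the \emph{highest} level $\lambda_N$ downward, reusing the same seed ball $B_x$ at each $x$, observes that the stopping exponent $n_x$ is monotone in the level, and then invokes the Vitali containment exactly as above.
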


\begin{proof}
Define a maximal function
\begin{equation*}
M_{B_0}f(x) = \sup_{\substack{ B \ni x \\ B\subset B_0 }} \mvint_{B} f \dmu,
\end{equation*}
where the supremum is taken over all balls containing $x$ and included in $B_0$. Write 
\[
 E_\lambda = \{ x \in B_0: M_{B_0}f(x) > \lambda \}.
\]

Let us first consider $\lambda_N$ to show how the balls are chosen.
By the definition of $M_{B_0}f$, for every $x \in E_{\lambda_N}$ 
there exists a ball $B_x$ with $x \in B_x \subset B_0$ and
\begin{equation}\label{Eq1}
\lambda_0 \leq \ldots \leq \lambda_N < \mvint_{B_x} f \dmu.
\end{equation}
We now take a look at the balls $5^kB_x,$ where $k \in \Z_+$.
Note that if a ball $B$ satisfies $B_0 \subset B \subset 11B_0$, then by
the choice of $\lambda_N$, we have
$$
\mvint_{B} f \dmu \leq \frac{1}{\mu(B_0)} 
\int_{11B_0} f \dmu \leq
\lambda_0 \leq \lambda_N.
$$
If $B_x$ has radius $r$, take $k$ such that $5^{k-1}r\leq2R<5^kr$. Then 
$B_0 \subset 5^kB_x \subset 11B_0$ and the average of $f$ over $5^kB_x$ 
is at most $\lambda_N$.
Consequently, there exists a smallest $n=n_x \geq1$ such that
\begin{equation}\label{Eq2}
\mvint_{5^nB_x} f \dmu \leq \lambda_N.
\end{equation}
Then
\begin{equation}\label{Eq3}
\lambda_N < \mvint_{5^jB_x} f \dmu
\end{equation}
for all $j=0,1,...n-1$.

Consider the balls $5^{n_x-1}B_x$. They form a covering of $E_{\lambda_N}$ and 
by the Vitali covering theorem we
may pick a countable subfamily of pairwise disjoint 
balls $B_i= 5^{n_{x_i}-1}B_{x_i}$ with
$$
E_{\lambda_N} \subset \bigcup_{i=1}^\infty 5B_i.
$$
The balls $B_i$ have the required properties.
Indeed, by \eqref{Eq2} and \eqref{Eq3}, we have
\begin{equation}\label{Eq5}
\lambda_N <  \mvint_{5^{n-1}B_x} f \dmu 
\leq c_\mu^3 \mvint_{5^nB_x} f
 \dmu \leq c_\mu^3 
\lambda_N,
\end{equation}thus proving ii).
Since $5B_i=5^nB_{x_i}$, the first inequality in iii) has already been proved in 
\eqref{Eq5}, while the second inequality is just \eqref{Eq2}.
It remains to prove i). We have $$B_0\setminus \bigcup_{i=1}^\infty 5B_i 
\subset B_0 \setminus E_{\lambda_N}.$$
This implies that
$
M_{B_0}f(x) \leq \lambda_N \text{ for $\mu$-a.e. } x \in B_0\setminus \bigcup_i 5B_i, 
$
from which we get i) by Lebesgue's differentiation theorem.

We have now constructed the desired decomposition at level $\lambda_N$ and
 turn to $\lambda_{N-1}$. 
Since $E_{\lambda_N} \subset E_{\lambda_{N-1}}$ 
for every $x \in E_{\lambda_N}$ we may start from exactly the same ball 
$B_x$ satisfying \eqref{Eq1} as before.
For every $x \in E_{\lambda_{N-1}}\setminus E_{\lambda_N}$ we take a ball
$B_x$ with $x \in B_x \subset B_0$ and
\begin{equation}
\lambda_0 \leq \ldots \leq \lambda_{N-1} < 
\mvint_{B_x} f \dmu.
\end{equation}
Now for each ball $B_x$ choose the smallest $m=m_x \geq 1$ satisfying
\begin{equation}\label{}
\mvint_{5^mB_x} f \dmu \leq \lambda_{N-1}.
\end{equation}
Notice that if $B_x$ is a ball corresponding to an $x \in E_{\lambda_N}$, 
then $n\leq m$ (here $n$ is from \eqref{Eq2}).
Then apply Vitali's theorem to the balls $5^{m-1}B_x$ to obtain a family of
balls satisfying properties i)-iii) with $\lambda_0$ replaced by 
$\lambda_{N-1}$.

Now let $B_i(\lambda_N)$ by any of the Calder\'on-Zygmund balls corresponding
to $\lambda_N$. Then $B_i(\lambda_N)=5^{n-1}B_{x_i}$ for some $x_i 
\in E_{\lambda_N}$ and $B_i(\lambda_N) \subset 5^{m-1}B_{x_i}$ (because $n\leq m$).
The ball $5^{m-1}B_{x_i}$ is not necessarily a Calder\'on-Zygmund ball 
corresponding to the level $\lambda_{N-1}$, but it is one of the balls 
in the collection from which 
the Calder\'on-Zygmund balls were extracted. Vitali's theorem shows
that $5^{m-1}B_{x_i}$ is contained in a 5-dilate of
some of them, say, $B_{j}(\lambda_{N-1})$. Then $B_i(\lambda_N) \subset 5B_{j}(\lambda_{N-1})$.

We continue this procedure. Next, we consider $E_{\lambda_{N-2}}$. For
$x \in E_{\lambda_N}$ we take the same ball $B_x$ which we used 
in the first step. 
For $x \in E_{\lambda_{N-1}}\setminus E_{\lambda_N}$ 
 we use the same ball 
$B_x$ which we used 
in the second step. For every $x \in 
E_{\lambda_{N-2}}\setminus E_{\lambda_{N-1}}$ we take a ball $B_x$ with 
$x \in B_x \subset B_0$ and
\begin{equation}
\lambda_0 \leq \ldots \leq \lambda_{N-2} < 
\mvint_{B_x} f \dmu
\end{equation}
and proceed as previously.
\end{proof}

\begin{lemma}
\label{toiterate}
Assume $f \in JN_p(B_0)$ 
and
$$
\lambda\geq \frac{1}{\mu(B_0)} \int_{11B_0} |f-f_{B_0}| \dmu.
$$
Consider Calder\'on-Zygmund balls 
$\{B_i(\lambda)\}_i$ and $\{B_j(2\lambda)\}_j$ for the function $|f-f_{B_0}|$ at levels $\lambda$ and $2\lambda$, respectively.
Suppose that each $B_i(2\lambda)$ is contained in some $5B_j(\lambda)$. 
Then we have
\begin{equation}\label{Eq4}
\sum_j \mu(B_j(2\lambda))\leq \frac{c_\mu^{3/q}K_f}{\lambda}
\left( \sum_i \mu(B_i(\lambda)) \right)^{1/q},
\end{equation}
where $q$ is the conjugate exponent of $p$, that is $1/p+1/q=1$.
\end{lemma}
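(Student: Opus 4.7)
The plan is to control $\sum_j \mu(B_j(2\lambda))$ by producing, for each ball $B_j(2\lambda)$, a quantity $\geq \lambda$ that behaves like an oscillation on a parent ball, and then extract a factor of $K_f$ via Hölder's inequality applied to the admissible collection $\{5B_i(\lambda)\}_i$.

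First, I would extract information from the Calderón--Zygmund properties applied to $g := |f-f_{B_0}|$. Part (ii) of Lemma \ref{CZ} at level $2\lambda$ gives $\mvint_{B_j(2\lambda)} g > 2\lambda$, while part (iii) at level $\lambda$ gives $\mvint_{5B_i(\lambda)} g \leq \lambda$. The second bound implies $|f_{5B_i(\lambda)} - f_{B_0}| \leq \lambda$. If $B_j(2\lambda) \subset 5B_i(\lambda)$, then by the triangle inequality
\[
\mvint_{B_j(2\lambda)} |f - f_{5B_i(\lambda)}| \dmu
\geq \mvint_{B_j(2\lambda)} g \dmu - |f_{5B_i(\lambda)} - f_{B_0}| > 2\lambda - \lambda = \lambda.
\]
Multiplying by $\mu(B_j(2\lambda))$ yields $\lambda\,\mu(B_j(2\lambda)) \leq \int_{B_j(2\lambda)} |f - f_{5B_i(\lambda)}| \dmu$.

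Next I would group the balls $B_j(2\lambda)$ by their parent $5B_i(\lambda)$. Writing $J_i = \{j : B_j(2\lambda) \subset 5B_i(\lambda)\}$ (choosing one parent per $j$ if there are several), and using the pairwise disjointness of the $B_j(2\lambda)$, I would sum to get
\[
\lambda \sum_{j \in J_i} \mu(B_j(2\lambda)) \leq \int_{5B_i(\lambda)} |f - f_{5B_i(\lambda)}| \dmu = \mu(5B_i(\lambda)) \mvint_{5B_i(\lambda)} |f - f_{5B_i(\lambda)}| \dmu.
\]
Summing over $i$ and applying Hölder's inequality with exponents $p$ and $q$,
\[
\lambda \sum_j \mu(B_j(2\lambda)) \leq \left( \sum_i \mu(5B_i(\lambda)) \Bigl[\mvint_{5B_i(\lambda)} |f - f_{5B_i(\lambda)}| \dmu\Bigr]^p \right)^{1/p} \left( \sum_i \mu(5B_i(\lambda)) \right)^{1/q}.
\]

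Finally, I would observe that $\{5B_i(\lambda)\}_i$ is an admissible family in the definition of $JN_p(B_0)$: the balls are centered in $B_0$, contained in $11B_0$ by Lemma \ref{CZ}, and their $\tfrac15$-dilates $B_i(\lambda)$ are pairwise disjoint. Hence the first factor is bounded by $K_f$. The doubling condition gives $\mu(5B_i(\lambda)) \leq c_\mu^3 \mu(B_i(\lambda))$, so the second factor is at most $c_\mu^{3/q}(\sum_i \mu(B_i(\lambda)))^{1/q}$. Dividing by $\lambda$ produces the claimed inequality.

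The main subtlety is the passage from $B_j(2\lambda)$ to a useful oscillation on its parent $5B_i(\lambda)$: one cannot directly apply the $JN_p$ bound with the constants $f_{B_j(2\lambda)}$, since the $B_j(2\lambda)$-averages of $g$ need not be close to $\lambda$. The trick is to use $f_{5B_i(\lambda)}$ as the centering constant and exploit that $5B_i(\lambda)$, not $B_i(\lambda)$, is the ball on which the $JN_p$ defining sum is evaluated, with the disjointness inherited from the undilated Calderón--Zygmund balls.
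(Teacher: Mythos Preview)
Your argument is correct and follows essentially the same route as the paper: group the $B_j(2\lambda)$ by parent $5B_i(\lambda)$, use the Calder\'on--Zygmund bounds to obtain $\lambda\,\mu(B_j(2\lambda)) \leq \int_{B_j(2\lambda)} |f-f_{5B_i(\lambda)}|\,d\mu$, sum to $\int_{5B_i(\lambda)}|f-f_{5B_i(\lambda)}|\,d\mu$, and then apply H\"older together with the admissibility of $\{5B_i(\lambda)\}$ in the $JN_p$ definition. The only cosmetic difference is that the paper first writes $2\lambda\sum_j\mu(B_j(2\lambda))\leq\sum_j\int_{B_j(2\lambda)}|f|\,d\mu$ and then absorbs a $\lambda\sum_j\mu(B_j(2\lambda))$ term from the right, whereas you reach the same pointwise inequality directly via the triangle inequality; the two computations are equivalent.
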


\begin{proof}
We may assume $K_f=1$ and $f_{B_0}=0$. We partition the 
family $\{B_j(2\lambda)\}_j$ as follows. First collect those which are contained
 in $5B_1(\lambda)$. From the remaining balls we 
collect those which are contained in
$5B_2(\lambda)$ and continue similarly. In other words,
$$
\{B_j(2\lambda)\}_j= \bigcup_i \{B_j(2\lambda)\}_{j \in J_i} 
$$
where
\begin{align*}
&J_1=\{j: B_j(2\lambda) \subset 5B_1(\lambda)\},\\
&J_2=\{j: B_j(2\lambda) \subset 5B_2(\lambda), j \notin J_1\},\\
&J_3=\{j: B_j(2\lambda) \subset 5B_3(\lambda), j \notin J_1 \cup J_2\},\\
&\,\,\,\,\,\,\,\,\,\,\vdots
\end{align*}
We have
\begin{equation}\label{}
2\lambda \sum_j \mu(B_j(2\lambda)) \leq \sum_j \int_{B_j(2\lambda)} |f| \dmu=
\sum_i \sum_{j \in J_i} \int_{B_j(2\lambda)} |f| \dmu,
\end{equation}
where
\begin{align*}
\sum_{j \in J_i} \int_{B_j(2\lambda)} |f| \dmu & \leq \sum_{j \in J_i} \int_{B_j(2\lambda)} 
||f|+\lambda-|f_{5B_i(\lambda)}|| \dmu \\
& \leq \sum_{j \in J_i} \int_{B_j(2\lambda)} |f-f_{5B_i(\lambda)}| \dmu 
+\sum_{j \in J_i} \int_{B_j(2\lambda)} \lambda \dmu \\
& \leq \int_{5B_i(\lambda)} |f-f_{5B_i(\lambda)}| \dmu +\lambda\sum_{j \in J_i} \mu(B_j(2\lambda)).
\end{align*}
Now we sum over $i$ to obtain 
$$
2\lambda \sum_j \mu(B_j(2\lambda)) \leq \sum_i \int_{5B_i(\lambda)} |f-f_{5B_i(\lambda)}| 
\dmu +\lambda \sum_j \mu(B_j(2\lambda)).
$$
By H\"older's inequality and the normalization $K_f=1$ we get
\begin{align*}
\sum_i \int_{5B_i(\lambda)} &|f-f_{5B_i(\lambda)}| \dmu\\ =&
\sum_i \mu(5B_i(\lambda))^{1/q}\mu(5B_i(\lambda))^{-1/q}  \int_{5B_i(\lambda)} 
|f-f_{5B_i(\lambda)}| \dmu \\
\leq &\left( \sum_i \mu(5B_i(\lambda)) \right)^{1/q}  \\
& \quad\cdot
\left( \sum_i \mu(5B_i(\lambda))^{-p/q} \left(\int_{5B_i(\lambda)} |f-f_{5B_i(\lambda)}| \dmu\right)^p \right)^{1/p}\\
\leq &c_\mu^{3/q}\left( \sum_i \mu(B_i(\lambda)) \right)^{1/q},
\end{align*}
whence
$$
2\lambda \sum_j \mu(B_j(2\lambda)) \leq c_\mu^{3/q}
\left( \sum_i \mu(B_i(\lambda)) \right)^{1/q} +\lambda \sum_j \mu(B_j(2\lambda)).
$$
This finishes the proof.
\end{proof}

{\bf\emph{Proof of Theorem~\ref{mainresult}}.}
We  wish to iterate the estimate \eqref{Eq4}. We still assume $K_f=1$ and $f_{B_0}=0$, whence
$$
\mu(B_0)\left( \mvint_{B_0} |f| \dmu\right)^p \leq 1
$$
and
$$
\mu(11B_0)\left( \mvint_{11B_0} |f-f_{11B_0}| \dmu\right)^p \leq 1.
$$
Therefore,
\begin{equation*}
\begin{split}
\frac{1}{\mu(B_0)}\int_{11B_0}& |f| \dmu  \leq
c_\mu^4 \mvint_{11B_0} |f-f_{11B_0}| \dmu
+c_\mu^4 \mvint_{B_0} |f_{11B_0}| \dmu\\
&\leq \frac{c_\mu^4}{\mu(11B_0)^{1/p}}+c_\mu^4 \mvint_{B_0} |f-f_{11B_0}| \dmu
+c_\mu^4 \mvint_{B_0} |f| \dmu\\
&\leq \frac{c_\mu^4}{\mu(B_0)^{1/p}}+c_\mu^8 \mvint_{11B_0} |f-f_{11B_0}| \dmu
+\frac{c_\mu^4}{\mu(B_0)^{1/p}}\\
&\leq \frac{C_1}{\mu(B_0)^{1/p}},
\end{split}
\end{equation*}
where $C_1=3c_\mu^8$. We choose
$$
\lambda_0=\frac{C_1}{\mu(B_0)^{1/p}}.
$$
Now let $\lambda > \lambda_0$ and take $N \in \Z_+$ such that
\begin{equation}\label{rajat}
2^N \lambda_0 < \lambda \leq 2^{N+1}\lambda_0.
\end{equation}
Then apply the decomposition lemma at levels $\lambda_0 <2\lambda_0
<2^2\lambda<\ldots <2^N\lambda$ to obtain $N+1$ families of Calder\'on-Zygmund balls. 
Observe that for $n=0,1,\ldots,N-1$ each $B_i(2^{n+1}\lambda)$ is 
contained in some
$5B_j(2^n\lambda)$.

First notice that
\begin{multline*}
\mu(\{x \in B_0: |f(x)|>\lambda\}) \leq \mu(\{x \in B_0: |f(x)|>2^N\lambda_0\}) \\
 \leq \sum_j \mu(5B_j(2^N\lambda_0)) \leq c_\mu^3\sum_j \mu(B_j(2^N\lambda_0)).
\end{multline*}
Then use \eqref{Eq4} and  the fact that
$$
1+q^{-1}+\ldots+q^{-(N-1)} = p-pq^{-N}
$$
to estimate
\begin{align*}
\sum_j \mu(B_j&(2^N\lambda_0))
\leq \frac{c_\mu^{3/q}}{2^{N-1}\lambda_0}
\left(\frac{c_\mu^{3/q}}{2^{N-2}\lambda_0}\right)^{1/q} 
\left(\frac{c_\mu^{3/q}}{2^{N-3}\lambda_0}\right)^{1/q^2}\cdot \ldots\\ 
&\,\,\,\,\,\,\,\cdot \left( \frac{c_\mu^{3/q}}{2^{0}\lambda_0}\right)^
{1/q^{N-1}}\cdot\left( \frac{1}{\lambda_0} \int_{11B_0} |f| \dmu\right)^{q^{-N}}\\
&=\frac{c_\mu^{3q^{-1}+3q^{-2}+\ldots3q^{-N}}}{g(N)}
\cdot\left(\frac{1}{\lambda_0}\right)^{p-pq^{-N}}
\cdot\left( \frac{1}{\lambda_0} \int_{11B_0} |f| \dmu\right)^{q^{-N}}.\\
\end{align*}
Here $g(1)=1$ and for $N\geq2$,
$$
\frac{1}{g(N)}
=\frac{2^{q^{-1}+2q^{-2}+\ldots+(N-1)q^{-(N-1)}}}{2^{(N-1)(p-pq^{-N})}}
$$
We have the estimate 
$$
\frac{c_\mu^{3q^{-1}+3q^{-2}+\ldots3q^{-N}}}{g(N)}
 \leq \frac{C}{2^{(N-1)p}},
$$
where the constant $C$ only depends on $p$ and the doubling constant.

Moreover, the choice of $\lambda_0$ gives
\begin{equation*}
 \begin{split}
\left(\frac{1}{\lambda_0}\right)^{-pq^{-N}}
\cdot\bigg( \frac{1}{\lambda_0} &\int_{11B_0} |f| \dmu\bigg)^{q^{-N}} \\ 
&\leq \left(\frac{C_1^p}{\mu(B_0)}\right)^{q^{-N}} \cdot
\mu(B_0)^{q^{-N}}=C_1^{pq^{-N}}\leq C_1^{pq}.
\end{split}
\end{equation*}
Now combine the previous estimates and use \eqref{rajat} to get
\begin{align*}
\mu(\{x \in B_0: |f(x)|>\lambda\})  \leq \frac{C}{2^{(N-1)p}} 
\left(\frac{1}{\lambda_0}\right)^p 
=\frac{C}{(2^{N-1}\lambda_0)^p}
 \leq \frac{C}{\lambda^{p}}.
\end{align*}
Here $C$ is a constant depending only on $p$ and on the doubling constant. For $0<\lambda<\lambda_0$ we use the trivial estimate
$$
\mu(\{x \in B_0: |f(x)| > \lambda \}) \leq \mu(B_0) =
\frac{C_1^p}{\lambda_0^p}\leq \frac{C_1^p}{\lambda^p}.
$$
\hfill{$\square$}

\section{Euclidean case}
In this section we give a new proof for the second John-Nirenberg ineaquality in $\R^n$. See Lemma 3 in \cite{JoNi61}.

\begin{theorem}[John-Nirenberg inequality II]\label{the1} If $f$ is a function 
satisfies \eqref{csum}, then $f-f_{Q_0}$ 
is in weak $L^p(Q_0)$,
 i.e., there exists $C>0$ depending only on $n$ and $p$ such that
\begin{equation}\label{JNlemma3}
|\{x\in Q_0:|f(x)-f_{Q_0}|>{\lambda}\}|\leq C\left(\frac{K_f}{\lambda}\right)^p
\end{equation}
for all $\lambda>0$. 
\end{theorem}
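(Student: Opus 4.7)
The plan is to run the same Calderón–Zygmund plus good-$\lambda$ argument as in the proof of Theorem~\ref{mainresult}, but with standard dyadic subcubes of $Q_0$ in place of metric balls. Since any two dyadic cubes in $\R^n$ are either disjoint or nested, the careful compatibility construction of Lemma~\ref{CZ} comes for free: nesting of Calderón–Zygmund families across different levels is automatic, and no Vitali bookkeeping is needed.

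First I would normalize so that $K_f=1$ and $f_{Q_0}=0$, reducing the problem to bounding $|\{x\in Q_0:|f(x)|>\lambda\}|$. Applying \eqref{csum} to the trivial partition $\{Q_0\}$ yields $\lambda_0:=\mvint_{Q_0}|f|\le|Q_0|^{-1/p}$, which serves as the base level. For each $\lambda\ge\lambda_0$ let $\{Q_i(\lambda)\}_i$ be the family of maximal dyadic subcubes of $Q_0$ on which $\mvint_{Q}|f|>\lambda$. By maximality the dyadic parent has average at most $\lambda$, so
$$
\lambda<\mvint_{Q_i(\lambda)}|f|\le 2^n\lambda,\qquad |f|\le\lambda\ \text{a.e.\ on }Q_0\setminus\bigcup_i Q_i(\lambda),
$$
and if $\mu>\lambda$ then each $Q_j(\mu)$ lies inside a unique $Q_i(\lambda)$.

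Next I would prove a good-$\lambda$ inequality. Because $|f_{Q_i(\lambda)}|$ is only bounded by $2^n\lambda$ (unlike $|f_{5B_i(\lambda)}|\le\lambda$ in Lemma~\ref{toiterate}), I would iterate with factor $C:=2^{n+1}$ rather than $2$. Partitioning $\{Q_j(C\lambda)\}_j$ according to which $Q_i(\lambda)$ contains each cube and using $|f|\le|f-f_{Q_i(\lambda)}|+|f_{Q_i(\lambda)}|$,
$$
C\lambda\sum_j|Q_j(C\lambda)|\le\sum_j\int_{Q_j(C\lambda)}|f|\le\sum_i\int_{Q_i(\lambda)}|f-f_{Q_i(\lambda)}|+2^n\lambda\sum_j|Q_j(C\lambda)|.
$$
The family $\{Q_i(\lambda)\}$ sits inside an admissible partition of $Q_0$ (complete it by dyadically splitting the good set), so Hölder's inequality together with \eqref{csum} gives $\sum_i\int_{Q_i(\lambda)}|f-f_{Q_i(\lambda)}|\le\bigl(\sum_i|Q_i(\lambda)|\bigr)^{1/q}$. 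Absorbing the $2^n\lambda$-term on the left, which is possible because $C-2^n=2^n>0$, yields
$$
\sum_j|Q_j(C\lambda)|\le\frac{1}{2^n\lambda}\Big(\sum_i|Q_i(\lambda)|\Big)^{1/q}.
$$

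Finally I would iterate this inequality along $\lambda_0<C\lambda_0<\cdots<C^N\lambda_0\le\lambda<C^{N+1}\lambda_0$, starting from $\sum_i|Q_i(\lambda_0)|\le|Q_0|$. The unrolling is structurally identical to the end of the proof of Theorem~\ref{mainresult}: the telescoping identity $1+q^{-1}+\cdots+q^{-(N-1)}=p-pq^{-N}$ combined with $\lambda_0^p\le|Q_0|^{-1}$ makes all $N$-dependent factors collapse into a constant depending only on $n$ and $p$, yielding $|\{|f|>\lambda\}|\le C/\lambda^p$; for $\lambda<\lambda_0$ the trivial bound $|Q_0|\le\lambda_0^{-p}\le C/\lambda^p$ suffices. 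The main technical obstacle is the constants bookkeeping in this geometric iteration, but the choice $C=2^{n+1}$ (so that $C-2^n=2^n$) keeps the recursion as clean as in the metric case.
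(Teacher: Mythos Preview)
Your proposal is correct and follows essentially the same approach as the paper: dyadic Calder\'on--Zygmund decomposition, a good-$\lambda$ inequality obtained from the $JN_p$ condition via H\"older, and iteration along the levels $C^k\lambda_0$ with $C=2^{n+1}=b^{-1}$. The paper phrases the good-$\lambda$ step (Lemma~\ref{pro1}) through the dyadic maximal function and the weak-type estimate iii) of Lemma~\ref{lem1} rather than your direct summation over the cubes $Q_j(C\lambda)$ (which mirrors Lemma~\ref{toiterate}), but this is only a cosmetic difference and the resulting inequality and iteration are the same.
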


Let ${Q}$ be a cube in $\Rn$ with sides parallel to the coordinate axes, and, 
denote by $|S|$ the Lebesgue measure of a set $S$. The dyadic maximal function of $f$ is defined as
\begin{equation}\label{maxf}
{M}^df(x)=\sup_{Q\ni x}\mvint_{Q}|f(y)|dy,
\end{equation}
where the supremum is taken over all dyadic cubes $Q$ containing $x$. Moreover, 
for $\lambda>0$  
 we define ${E}_Q({\lambda})=\{x\in Q: {M}^df(x)>\lambda\}$.

We recall a decomposition lemma; see \cite{St93}, Chapter IV, Section 3.1.

\begin{lemma}\label{lem1}
Let $Q_0$ be a cube and let
$f \in L^1(Q_0)$. 
Suppose that
$$
\mvint_{Q_0}|f(x)|\dx\leq\lambda.
$$
Then we have $E_{Q_0}(\lambda)=\bigcup_{k=1}^{\infty} Q_k$, where $\{Q_k\}$ is a
collection of cubes whose interiors are disjoint, such that 
\begin{enumerate}[i)]
\item $|f(x)|\leq\lambda$ \ for  a.e.\ $x\in Q_0\setminus\bigcup_{k=1}^{\infty} Q_k$,
\item  $\lambda < \mvint_Q|f(x)|\dx\leq 2^n\lambda$, for all
$Q$ in the collection $\{Q_k\}$,
\item  {\em $|E_{Q_0}(\lambda)|\leq \frac{1}{\lambda}\int_{E_{Q_0}(\lambda)}|f(x)| \dx$}. 
\end{enumerate}
\end{lemma}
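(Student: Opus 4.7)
The plan is the textbook dyadic Calder\'on--Zygmund stopping-time construction. I begin with $Q_0$, which by hypothesis has average at most $\lambda$, so $Q_0$ itself does not get selected. Bisect $Q_0$ into its $2^n$ dyadic children, and for each child $Q'$ test whether $\mvint_{Q'}|f|\dx>\lambda$. If yes, select $Q'$ and stop subdividing it; if no, bisect $Q'$ further and apply the same test to the grandchildren. Iterating this stopping rule produces a countable family $\{Q_k\}$ of dyadic subcubes of $Q_0$ whose interiors are pairwise disjoint.

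Property (ii) falls out of the stopping rule: a selected $Q_k$ has $\mvint_{Q_k}|f|\dx>\lambda$ by construction, while its parent $\widetilde Q_k$ (one bisection up) failed the test, i.e.\ $\mvint_{\widetilde Q_k}|f|\dx\le\lambda$; since $|\widetilde Q_k|=2^n|Q_k|$, monotonicity of the integral gives the upper bound $\mvint_{Q_k}|f|\dx\le 2^n\lambda$. For (i), any point $x\in Q_0\setminus\bigcup_k Q_k$ lies in an infinite nested sequence of dyadic cubes of vanishing diameter, each with average at most $\lambda$; Lebesgue's differentiation theorem then forces $|f(x)|\le\lambda$ at almost every such $x$.

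It remains to identify $E_{Q_0}(\lambda)=\bigcup_k Q_k$. One inclusion follows from (ii): if $x\in Q_k$ then $M^df(x)\ge\mvint_{Q_k}|f|\dx>\lambda$. Conversely, given $x\in E_{Q_0}(\lambda)$, pick a dyadic $Q\subset Q_0$ with $x\in Q$ and $\mvint_Q|f|\dx>\lambda$; climbing the ancestor chain from $Q$ toward $Q_0$, the averages must eventually drop to at most $\lambda$ (since $Q_0$ itself does), so there is a largest ancestor $Q^{\ast}$ of $Q$ still having average $>\lambda$, and $Q^{\ast}$ is, by construction, a selected cube containing $x$. Property (iii) is then a one-line consequence of (ii) and the disjointness of interiors:
$$
|E_{Q_0}(\lambda)|=\sum_k|Q_k|\le\sum_k\frac{1}{\lambda}\int_{Q_k}|f|\dx=\frac{1}{\lambda}\int_{E_{Q_0}(\lambda)}|f|\dx.
$$
The only mildly delicate point is the last set identification, which hinges on the supremum in $M^df$ being taken over dyadic subcubes of $Q_0$; beyond that there is no real obstacle here, as the whole construction is entirely standard.
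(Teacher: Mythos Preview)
Your argument is correct and is exactly the standard dyadic Calder\'on--Zygmund stopping-time construction. The paper does not supply its own proof of this lemma; it simply cites Stein~\cite{St93}, Chapter~IV, Section~3.1, where the same construction appears, so there is nothing further to compare.
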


The following good-$\lambda$-inequality is the core of our proof.
\begin{lemma}\label{pro1} For a function $f \in JN_p(Q_0)$ and 
a number $0<b<2^{-n}$ we have 
\begin{multline}\label{equ1}
|\{x\in Q_0:M^d(f-f_{Q_0})(x)>{\lambda}\}|
\\
\leq 
\frac{aK_f}{\lambda}|\{x\in Q_0:M^d(f-f_{Q_0})(x)>{b\lambda}\}|^{1/q}
\end{multline}
for all $\lambda\geq\frac{1}{b}\mvint_{Q_0}|f(x)-f_{Q_0}|\dx$, where $a=\frac{1}{(1-2^nb)}$.
\end{lemma}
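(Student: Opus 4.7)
\textbf{Proof plan for Lemma \ref{pro1}.} The plan is to combine a dyadic Calderón--Zygmund decomposition at the lower level $b\lambda$ with a Chebyshev/weak-$(1,1)$ estimate on each decomposition cube, and then feed the result into the JN$_p$ definition via Hölder's inequality. Write $g = f - f_{Q_0}$. The hypothesis $\lambda \ge \frac{1}{b}\mvint_{Q_0}|g|\,dx$ means $\mvint_{Q_0}|g|\,dx \le b\lambda$, so Lemma \ref{lem1} is available at level $b\lambda$, producing a disjoint family of maximal dyadic cubes $\{Q_k\}$ with $E_{Q_0}(b\lambda) = \bigcup_k Q_k$ and $b\lambda < \mvint_{Q_k}|g|\,dx \le 2^n b\lambda$.

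The first key step is to localize. Since $E_{Q_0}(\lambda) \subset E_{Q_0}(b\lambda) = \bigcup_k Q_k$, it suffices to bound each $|E_{Q_0}(\lambda)\cap Q_k|$. If $x \in Q_k$ and $M^d g(x) > \lambda$, then the witnessing dyadic cube cannot strictly contain $Q_k$ (by maximality, every strict dyadic ancestor has average at most $b\lambda < \lambda$), hence it lies inside $Q_k$. Thus $M^d_{Q_k} g(x) > \lambda$, where $M^d_{Q_k}$ denotes the dyadic maximal function restricted to subcubes of $Q_k$. Using $|g_{Q_k}| \le \mvint_{Q_k}|g|\,dx \le 2^n b\lambda$ and subadditivity, this yields $M^d_{Q_k}(g - g_{Q_k})(x) > (1 - 2^n b)\lambda = \lambda/a$. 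Notice that $g - g_{Q_k} = f - f_{Q_k}$.

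The second step is a weak-$(1,1)$ estimate on each $Q_k$ for the dyadic maximal function, giving
\[
\bigl|\{x \in Q_k : M^d_{Q_k}(f - f_{Q_k})(x) > \lambda/a\}\bigr| \le \frac{a}{\lambda}\int_{Q_k}|f - f_{Q_k}|\,dx.
\]
Summing over $k$ and applying Hölder's inequality with exponents $p$ and $q$,
\[
\sum_k \int_{Q_k}|f - f_{Q_k}|\,dx = \sum_k |Q_k|^{1/q}\cdot |Q_k|^{1/p}\mvint_{Q_k}|f-f_{Q_k}|\,dx \le \Bigl(\sum_k |Q_k|\Bigr)^{1/q}\Bigl(\sum_k |Q_k|\bigl(\mvint_{Q_k}|f-f_{Q_k}|\bigr)^p\Bigr)^{1/p}.
\]
The second factor is at most $K_f$ by the JN$_p$ condition (if the definition requires $\bigcup Q_k = Q_0$, one extends $\{Q_k\}$ to a dyadic partition by adding the maximal dyadic cubes in $Q_0 \setminus \bigcup Q_k$; extra non-negative terms only relax the bound). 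The first factor equals $|E_{Q_0}(b\lambda)|^{1/q}$, and \eqref{equ1} follows.

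The step I expect to require the most care is the localization argument: verifying that a dyadic cube witnessing $M^d g(x) > \lambda$ for $x \in Q_k$ must be a subcube of $Q_k$, and then cleanly converting $M^d g$ into $M^d(f - f_{Q_k})$ at the cost of an $(1-2^n b)$ factor; the sharp constant $a = 1/(1-2^n b)$ enters exactly here, which is why the hypothesis $b < 2^{-n}$ is imposed. The remaining pieces (Calderón--Zygmund decomposition, weak-$(1,1)$, Hölder, JN$_p$ bound) are then standard and assemble as above.
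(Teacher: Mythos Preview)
Your proposal is correct and follows essentially the same route as the paper's proof: Calder\'on--Zygmund decomposition at level $b\lambda$, localization of the maximal function to each $Q_k$ via maximality, subtraction of $f_{Q_k}$ at the cost of the factor $(1-2^nb)$, a weak-$(1,1)$ bound on each $Q_k$, and then H\"older plus the JN$_p$ condition to finish. The only cosmetic difference is that the paper invokes part iii) of Lemma~\ref{lem1} (which requires the average hypothesis and hence a short case distinction when $\mvint_{Q_k}|f-f_{Q_k}|>(1-2^nb)\lambda$), whereas your direct use of the unconditional weak-$(1,1)$ inequality for the dyadic maximal function sidesteps that split; both arrive at the same estimate.
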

\begin{proof}
\noindent
Without loss of generality, we assume that $f_{Q_0}=0$, then \eqref{equ1} becomes
\begin{equation}\label{equ2}
|E_{Q_0}(\lambda)|\leq \frac{aK_f}{\lambda}\cdot  |E_{Q_0}(b\lambda)|^{1/q}.\end{equation}

First, we apply Lemma~\ref{lem1} to $|f(x)|$ on $Q_0$ with $\lambda$ replaced by $b\lambda$ to get a collection of countable 
disjoint dyadic cubes $\{Q_k\}_{k\geq 1}$, such that $E_{Q_0}(b\lambda)=\bigcup_{k=1}^{\infty}Q_k$. It follows that $E_{Q_0}(\lambda)=\bigcup_{k=1}^{\infty}E_{Q_k}(\lambda)$ since $E_{Q_0}(\lambda)\subset E_{Q_0}(b\lambda)$. 

Moreover, let $x\in Q_k$ be such that $M^df(x)>\lambda$. Then there exists a 
dyadic cube $Q$ containing $x$ with
\begin{equation}
 \label{bound}
\mvint_{Q}f\dx >\lambda.
\end{equation}
Since $Q_k$ is the maximal dyadic cube such that the first inequality in $ii)$ holds for $b\lambda$, $Q\subset Q_k$ and it follows from \eqref{bound} that $M^d(f\chi_{Q_k})(x)>\lambda$. Moreover, $M^d[(f-f_{Q_k})\chi_{Q_k}](x)>(1-2^nb)\lambda$ by the second inequality in $ii)$.

Then fix a $k$, if $\mvint_{Q_k}|(f-f_{Q_k})|\dx\leq (1-2^nb)\lambda$, we apply Lemma~\ref{lem1} to $|(f-f_{Q_k})\chi_{Q_k}|$ on $Q_k$ with $\lambda$ replaced by $(1-2^nb)\lambda$.  By $iii)$ we have
\begin{equation}
\label{Qk}
 \begin{split}
|E_{Q_k}(\lambda)| &\leq |\{x\in Q_k:M^d[(f-f_{Q_k})\chi_{Q_k}](x)>(1-2^nb)\lambda\}| \\
&\leq \frac{1}{(1-2^nb)\lambda}\int_{Q_k}|f-f_{Q_k}| \dx \\
&=\frac{|Q_k|^{1/q}}{(1-2^nb)\lambda}\left({|Q_k|^{1/p-1}}\int_{Q_k}|f-f_{Q_k}| \dx\right).
\end{split}
\end{equation}
Otherwise $|Q_k|<\frac{1}{(1-2^nb)\lambda}\int_{Q_k}|f-f_{Q_k}| \dx$, and \eqref{Qk} holds as well.

By adding these inequalities for all $k$, we get, by the H\"older inequality
\begin{equation*}
\begin{split}
|E_{Q_0}&(\lambda)|\leq\sum_{k}\frac{|Q_k|^{1/q}}{(1-2^nb)\lambda}\left({|Q_k|^{1/p-1}}\int_{Q_k}|f-f_{Q_k}| \dx\right)\\
&\leq\frac{1}{(1-2^nb)\lambda}\left(\sum |Q_k|\right)^{1/q}\left\{\sum {|Q_k|^{1-p}}\left[\int_{Q_k}|f-f_{Q_k}| \dx\right]^p\right\}^{1/p}\\
&\leq\frac{1}{(1-2^nb)\lambda}|E_{Q_0}(b\lambda)|^{1/q}K_{f}
\end{split}
\end{equation*}
since $Q_k$ are disjoint. 
\end{proof}

We are now ready to prove the John-Nirenberg lemma.

\noindent
{\bf\emph{Proof of Theorem \ref{the1}.}}
Without loss of generality we may assume $f_{Q_0}=0$. Let $b=2^{-(n+1)}$ 
and define
$$
\eta =\frac{K_f}{b\cdot |Q_0|^{1/p}}.
$$
Let
$$
\lambda\geq\frac{1}{b}\mvint_{Q_0}|f(x)|\dx
$$
and let $j$ be the smallest integer satisfying $b^{-j}\eta<\lambda$. We iterate the estimate \eqref{equ2} $j$ times to get
\begin{equation*}
 \begin{split}
|E_\lambda(Q_0)|&\leq \left|E_{Q_0}\left(b^{-j}\eta\right)\right|\\
&\leq\left(\frac{aK_f}{b^{-j}\eta}\right)\left(\frac{aK_f}{b^{-j+1}\eta}\right)^{1/q}\cdots \left(\frac{aK_f}{b^{-1}\eta}\right)^{1/q^{j-1}}\left|E_{Q_0}(\eta)\right|^{1/q^j}\\
&\leq\left(\frac{aK_f}{b\lambda}\right)\left(\frac{aK_f}{b^{2}\lambda}\right)^{1/q}\cdots \left(\frac{aK_f}{b^{j}\lambda}\right)^{1/q^{j-1}}\left[\frac{1}{\eta}{\int_{Q_0}|f|\dx}\right]^{1/q^j},
\end{split}
\end{equation*}
where the third inequality comes from the weak type inequality $iii)$
and the definition of $j$.

Observe that
$$
1+\frac{2}{q}+\cdots+\frac{j}{q^{j-1}} \leq p^2.
$$
By the definition of JN$_p$ and $\eta$ we have
$$
\frac{1}{\eta}{\int_{Q_0}|f|\dx} \leq b|Q_0|.
$$
Hence,
\begin{align*}
|E_{Q_0}(\lambda)|
&\leq
\left(\frac{aK_f}{\lambda}\right)^{p(1-q^{-j})}b^{-p^2}(b|Q_0|)^{1/q^j}\\
&=2^{p(1-q^{-j})}2^{(n+1)(p^2-1/q^j)}\left(\frac{K_f}{\lambda}\right)^{p}\left|\frac{\lambda |Q_0|^{1/p}}{K_f}\right|^{p/q^j}.
\end{align*}
By the definition of $\eta$ and $j$ we have that 
$$
\frac{\lambda|Q_0|^{1/p}}{K_f}\leq b^{-j+2}=2^{(n+1)(j-2)}.
$$
Since
$$
(j-2)q^{-j}\leq q^{-3}p^2,
$$
we can now conclude
$$
|E_{Q_0}(\lambda)|
\leq 
2^{p+(n+1)(p^2+(\frac{p}{q})^3)}\left(\frac{K_f}{\lambda}\right)^{p}.
$$
This proves the theorem for large values of $\lambda$.

For $\lambda\leq\frac{K_f}{b |Q_0|^{1/p}}$, we have
$$
|E_{Q_0}(\lambda)|
\leq
|Q_0|
\leq
2^{(n+1)p}\left(\frac{K_f}{\lambda}\right)^p
$$
as desired.

Observe that this proof can be generalized to the metric setting via Christ's dyadic sets and by a Calder\'on-Zygmund decomposition lemma by Aimar \& al.; see  Theorems 2.6 and 3.1 in \cite{AiBeIa05}.


\section{John-Nirenberg inequality for a doubling measure}

In this section we give a new proof of the John-Nirenberg lemma in a doubling metric measure space. The result is by no means sharper or more general than the results in the literature. Nevertheless, we wish that the current proof will further increase the understanding of the phenomenon.

We recall that a locally integrable function $f\colon X \to \R$ is in $\bmo(X)$ if there exists a constant $c$ such that
\begin{equation}
\label{bmo}
\mvint_{B}|f-f_B|\dmu\leq c
\end{equation}
for all balls $B$ in $X$. The space is equipped with the seminorm
\begin{equation*}
\|f\|_{\sharp}=\sup_{B\subset X}\mvint_{B}|f-f_B|\dmu.
\end{equation*}
 If we define an equivalence relation
\[f\sim g \quad \textrm{if and only if}\quad f-g=constant,
\]
then $\bmo(X)/_{\sim}$ is a normed space. As is common, we continue denoting the space $\bmo(X)$ and speak of functions instead of equivalence classes.

\begin{theorem}
Let $f \in \BMO (X)$. Then we have
$$
\mu (\{x\in B: |f-f_B|>\lambda \})
\leq
c_1 \mu(B)\, e^{-c_2\frac{\lambda}{\|f\|_{\sharp}}}
$$
for all balls $B\subset X$ and $\lambda >0$ with
with $c_1,c_2$ not depending on $f$ and $\lambda$.
\end{theorem}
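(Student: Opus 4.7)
My plan is to follow Calder\'on's approach for $\bmo$, now made viable in the doubling metric setting by the new Calder\'on--Zygmund decomposition (Lemma~\ref{CZ}). After scaling to $\|f\|_\sharp = 1$ and subtracting a constant so that $f_B = 0$ (writing $B_0 := B$), I would first check that Lemma~\ref{CZ} applies to the function $|f|$ at a small starting level. Using the $\bmo$ estimate on $11B_0$ together with $|f_{B_0} - f_{11B_0}| \leq (\mu(11B_0)/\mu(B_0))\|f\|_\sharp \leq c_\mu^4$, one obtains
$$\frac{1}{\mu(B_0)} \int_{11B_0} |f| \dmu \leq 2c_\mu^4 =: C_0.$$
Fix $A := 2c_\mu^3$ and introduce the arithmetically spaced levels $\lambda_n := C_0 + nA$. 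By the nesting clause of Lemma~\ref{CZ}, one may choose Calder\'on--Zygmund families $\{B_i(\lambda_n)\}_i$ for $|f|$ in such a way that each $B_j(\lambda_{n+1})$ is contained in some $5B_i(\lambda_n)$.

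The heart of the proof is a one-step geometric recursion. For fixed $n$ and $i$, let $J_i$ index those $B_j(\lambda_{n+1})$ contained in $5B_i(\lambda_n)$. Property~(iii) of Lemma~\ref{CZ} at level $\lambda_n$ gives $|f_{5B_i(\lambda_n)}| \leq \mvint_{5B_i(\lambda_n)} |f| \dmu \leq \lambda_n$, while property~(ii) at level $\lambda_{n+1}$ gives $\mvint_{B_j(\lambda_{n+1})} |f| \dmu > \lambda_{n+1}$. Subtracting and using the reverse triangle inequality yields
$$A\,\mu(B_j(\lambda_{n+1})) \leq \int_{B_j(\lambda_{n+1})} |f - f_{5B_i(\lambda_n)}| \dmu,$$
and summing in $j \in J_i$ (using disjointness inside $5B_i$) followed by the $\bmo$ bound on $5B_i$ produces
$$A \sum_{j \in J_i} \mu(B_j(\lambda_{n+1})) \leq \int_{5B_i(\lambda_n)} |f - f_{5B_i(\lambda_n)}| \dmu \leq c_\mu^3 \mu(B_i(\lambda_n)).$$
Summing in $i$ and using the choice $A = 2c_\mu^3$ gives $\sum_j \mu(B_j(\lambda_{n+1})) \leq \tfrac{1}{2} \sum_i \mu(B_i(\lambda_n))$.

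Iterating this recursion yields $\sum_j \mu(B_j(\lambda_n)) \leq 2^{-n}\sum_i \mu(B_i(\lambda_0)) \leq 2^{-n} \mu(11B_0) \leq c_\mu^4\,2^{-n}\mu(B_0)$, since the disjoint balls $B_i(\lambda_0)$ all lie in $11B_0$. Property~(i) of Lemma~\ref{CZ} then gives $\{x \in B_0 : |f - f_{B_0}| > \lambda_n\} \subset \bigcup_j 5B_j(\lambda_n)$ up to a null set, whence
$$\mu(\{x \in B_0 : |f - f_{B_0}| > \lambda_n\}) \leq c_\mu^3 \sum_j \mu(B_j(\lambda_n)) \leq c_\mu^7\,2^{-n}\mu(B_0).$$
For arbitrary $\lambda \geq C_0$ I would pick $n$ with $\lambda_n \leq \lambda < \lambda_{n+1}$ to convert the factor $2^{-n}$ into $c_1 e^{-c_2\lambda/\|f\|_\sharp}$ (undoing the normalization); for $\lambda < C_0$ the trivial bound $\mu(B_0)$ already has the required form after enlarging $c_1$.

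The main obstacle is the simultaneous calibration of the starting level $\lambda_0 = C_0$ and the step size $A$: the former must dominate the relevant average so that Lemma~\ref{CZ} applies at every $\lambda_n$, while the latter must be large enough relative to $c_\mu^3$ so that the recursion factor $c_\mu^3/A$ is strictly less than one. The feature of the new Calder\'on--Zygmund decomposition that is decisive here is property~(iii), the upper bound $\mvint_{5B_i}|f|\dmu \leq \lambda_n$ on the enlarged balls; this allows $f_{5B_i(\lambda_n)}$ to be used as the recentering for the next level and is the link that chains successive stages of the iteration together.
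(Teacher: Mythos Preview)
Your proposal is correct and follows essentially the same approach as the paper: the same Calder\'on-style iteration of Lemma~\ref{CZ}, the same use of property~(iii) to control $|f_{5B_i(\lambda_n)}|$, and the same recursion $\sum_j\mu(B_j(\lambda_{n+1}))\leq\tfrac12\sum_i\mu(B_i(\lambda_n))$ obtained from the $\bmo$ bound on $5B_i$. The one slip is the constant $C_0$: from $|f_{11B_0}|\leq c_\mu^4$ and $\mvint_{11B_0}|f-f_{11B_0}|\dmu\leq 1$ you get $\frac{1}{\mu(B_0)}\int_{11B_0}|f|\dmu\leq c_\mu^4(1+c_\mu^4)\leq 2c_\mu^8$, not $2c_\mu^4$; the paper indeed takes the starting level $a=2c_\mu^8$ for this reason, and with that adjustment your argument goes through verbatim.
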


\begin{proof}
Take $f \in \BMO(X)$. We may assume that $\Ar f \Ar_\sharp=1$. 
We first notice that
\begin{align*}
\frac{1}{\mu(B)} \int_{11B} |f-f_B| \dmu &\leq c_\mu^4 \mvint_{11B}
|f-f_{11B}| \dmu+c_\mu^4|f_B-f_{11B}|\\
&\leq c_\mu^4+c_\mu^4\mvint_B |f-f_{11B}| \dmu\\
&\leq 2c_\mu^8.
\end{align*}
Thus, the expression on the left-hand side above is bounded uniformly
in $B$. 
Now fix a ball $B_0$ and assume $f_{B_0}=0$. If $\{B_j\}_j$
is the Calder\'on-Zygmund decomposition at level $\lambda\geq 2c_\mu^8$,
given by Lemma \ref{CZ}, then
\begin{enumerate}[i)] 
\item $|f(x)|\leq \lambda$ for $\mu$-a.e. $x \in B_0\setminus 
\bigcup_j 5B_j$,
\item $\lambda < \mvint_{B_j} |f|\leq c_\mu^3 \lambda$,
\item $c_\mu^{-3}{\lambda} < 
\mvint_{5B_j} |f| \leq \lambda$.
\end{enumerate}
We have by i) that
\begin{equation}\label{CM2}
\mu(\{ x \in B_0: |f(x)|>\lambda \}) \leq \sum_j \mu(5B_j) 
\leq c_\mu^3\sum_j \mu(B_j).
\end{equation}
Analogously to Calder\'on's proof in \cite{Ne77}, we wish to study the size of 
$\sum_j \mu(B_j)$.
Apply the decomposition lemma at levels $\lambda >\gamma 
\geq 2c_\mu^8$.
Denote the corresponding Calder\'on-Zygmund balls by 
$\{B_j(\lambda)\}_j$ and $\{B_k(\gamma)\}_k$, which we choose
in a similar way as in the proof of Lemma~\ref{toiterate}.
We write 
$\{B_j(\lambda)\}_j$ as a disjoint union
$$
\{B_j(\lambda)\}_j= \bigcup_k \{B_j(\lambda)\}_{j \in J_k} 
$$
where
$J_k$'s are defined as in the proof of Lemma~\ref{toiterate}, but $2\lambda$ replaced by $\lambda$, and, $\lambda$ by $\gamma$.
By ii), we may now write
\begin{equation}\label{CM1}
\lambda \sum_j \mu(B_j(\lambda)) \leq \sum_j \int_{B_j(\lambda)} |f| \dmu=
\sum_k \sum_{j \in J_k} \int_{B_j(\lambda)} |f| \dmu.
\end{equation}
Moreover, we have
\begin{align*}
\sum_{j \in J_k} \int_{B_j} |f| \dx & \leq \sum_{j \in J_k} \int_{B_j} 
||f|+\gamma-|f_{5B_k(\gamma)}|| \dx \\
& \leq \sum_{j \in J_k} \int_{B_j} |f-f_{5B_k(\gamma)}| \dx +\sum_{j \in J_k} \int_{B_j} \gamma \dx \\
& \leq \int_{5B_k(\gamma)} |f-f_{5B_k(\gamma)}| \dx +\gamma\sum_{j \in J_k} \mu(B_j) \\
& \leq \mu(5B_k(\gamma))+\gamma\sum_{j \in J_k} \mu(B_j) \\
& \leq c_\mu^3\mu(B_k(\gamma))+\gamma\sum_{j \in J_k} \mu(B_j).
\end{align*}
Now sum over $k$ and use \eqref{CM1} to obtain 
$$
\lambda \sum_j \mu(B_j(\lambda)) \leq c_\mu^3
\sum_k \mu(B_k(\gamma))+\gamma \sum_{j} \mu(B_j(\lambda)).
$$
Thus, we see that 
\begin{equation}\label{}
(\lambda-\gamma)\sum_j \mu(B_j(\lambda)) \leq c_\mu^3
\sum_k \mu(B_k(\gamma)).
\end{equation}
whenever $\lambda\geq \gamma \geq 2c_\mu^8$. Now
set $a=2c_\mu^8>2c_\mu^3$ and replace $\lambda$ and $\gamma$ 
respectively by $\lambda+a$ and $\lambda$.
We have shown that if $\lambda \geq a$ and the 
Calder\'on-Zygmund balls corresponding 
to $\lambda$ 
and $\lambda+a$ are chosen in such a way that each ball
$B_j(\lambda+a)$ is contained in some $5B_k(\lambda)$, then
$$
\sum_j \mu(B_j(\lambda+a)) \leq \frac{1}{2} \sum_k \mu(B_k(\lambda)).
$$ 
Now let $\lambda \geq a$ and take $N \in \Z_+$ such that
$
Na \leq \lambda < (N+1)a. 
$ 
Then apply the decomposition lemma at each level $a <2a<\ldots <Na$.
From the above estimate and \eqref{CM2} we get
\begin{equation*}
 \begin{split}
 \mu(\{ x \in B_0: &|f(x)|>\lambda \})  \leq \mu(\{ x \in B_0: |f(x)|>Na \})\\
& \leq c_\mu^3 \sum_j\mu(B_j(Na)) \leq c_\mu^32^{-N+1}\sum_j \mu(B_j(a))\\
&\leq  c_\mu^32^{-N+1}\mu(11B_0)\leq c_\mu^7\E^{(2-\lambda/a)\log 2}\mu(B_0)\\
&=4c_\mu^7\E^{-(\lambda \log 2)/a}\mu(B_0).
\end{split}
\end{equation*}
For $0<\lambda<a$ we have
\begin{equation*}
 \begin{split}
\mu(\{ x \in B_0: |f(x)|>&\lambda \}) \leq \mu(B_0) \\
&\leq 4c_\mu^7\E^{-\log 2}\mu(B_0)\leq 4c_\mu^7\E^{-(\lambda \log 2)/a}\mu(B_0).
\end{split}
\end{equation*}
Hence the John-Nirenberg inequality holds with $c_1=4c_\mu^7$
and $c_2=(\log 2)/a$.
\end{proof}


\vspace{0.5cm}
\noindent
\small{\textsc{D.A.},}
\small{\textsc{Department of Mathematics},}
\small{\textsc{FI-20014 University of Turku},}
\small{\textsc{Finland}}\\
\footnotesize{\texttt{daniel.aalto@iki.fi}}

\vspace{0.5cm}
\noindent
\small{\textsc{L. B.},}
\small{\textsc{Department of Mathematics},}
\small{\textsc{FI-90014 University of Oulu},}
\small{\textsc{Finland}}\\
\footnotesize{\texttt{lauri.berkovits@oulu.fi}}

\vspace{0.5cm}
\noindent
\small{\textsc{O. E. M.},}
\small{\textsc{Mathematisches Institut},}
\small{\textsc{Universit\"at Bern},}
\small{\textsc{3012 Bern, Switzerland}}\\
\footnotesize{\texttt{outi.elina.maasalo@tkk.fi}}

\vspace{0.5cm}
\noindent
\small{\textsc{H. Y.},}
\small{\textsc{Department of Mathematics and Informatics},}
\small{\textsc{Trine University},}
\small{\textsc{Angola, IN, USA 46703}}\\
\footnotesize{\texttt{yueh@trine.edu}}

\end{document}